\theoremstyle{plain}
\newtheorem{theorem}{Theorem}[subsection]
\newtheorem{proposition}[theorem]{Proposition}
\newtheorem{lemma}[theorem]{Lemma}
\newtheorem{corollary}[theorem]{Corollary}
\newtheorem{definition}[theorem]{Definition}
\newtheorem{remark}[theorem]{Remark}
\begin{document}

\newcommand{\U}{\ensuremath{\mathfrak{u}_{1}}}

\newcommand{\ad}{\ensuremath { ad(\mathfrak{u}_{1})}}
\newcommand{\Oo}{\ensuremath{\varOmega^{0}(ad(\mathfrak{u}_{1}))}}
\newcommand{\Ok}{\ensuremath {\varOmega^{1}(ad(\mathfrak{u}_{1}))}}

\newcommand{\svb}{\ensuremath{\mathcal{S}}}
\newcommand{\svbp}{\ensuremath{\mathcal{S}^{+}}}
\newcommand{\svbn}{\ensuremath{\mathcal{S}^{-}}}
\newcommand{\csa }{\ensuremath {\mathcal{S}_{\mathfrak{c}}}}
\newcommand{\cs}{\ensuremath{\mathcal{S^{\mathfrak{c}}}}}
\newcommand{\vsa}{\ensuremath{\Omega^{0}(\mathcal{S}^{+}_{\mathfrak{c}})}}
\newcommand{\vsan}{\ensuremath{\Gamma(\mathcal{S^{-}_{\mathfrak{c}}})}}
\newcommand{\la }{\ensuremath{{\mathcal{L}}_{\mathfrak{c}}}}
\newcommand{\sla }{\ensuremath{{\mathcal{L}}^{1/2}_{\mathfrak{c}}}}
\newcommand{\pcsa }{\ensuremath {\mathcal{S}^{+}_{\mathfrak{c}}}}

\newcommand{\ca}{\ensuremath{\mathcal{C}_{\mathfrak{c}}}}
\newcommand{\cas}{\ensuremath{\mathcal{C}^{\sigma}_{\mathfrak{c}}}}
\newcommand{\Aa}{\ensuremath {\mathcal{A}_{\mathfrak{c}}}}
\newcommand{\Q}{\ensuremath{\mathcal{A}_{\mathfrak{c}}\times_\mathcal{G_{\mathfrak{c}}}
\varGamma (S^{+}_{\mathfrak{c}})}}
\newcommand{\B}{\ensuremath{\mathcal{B}_{\mathfrak{c}}}}
\newcommand{\Bs}{\ensuremath{\mathcal{B}^{\sigma}_{\mathfrak{c}}}}
\newcommand{\Bn}{\ensuremath{\mathcal{B}^{N}_{\mathfrak{c}}}}
\newcommand{\jx}{\ensuremath{\mathcal{J}_{M}}}

\newcommand{\G}{\ensuremath{\mathcal{G}}}

\newcommand{\h}{\ensuremath{H^{\mathcal{S}\mathcal{W}}_{(A,\phi)}}}
\newcommand{\hh}{\ensuremath{\widehat{H}^{\mathcal{S}\mathcal{W}}_{(A,\phi)}}}

\newcommand{\lda}{\lambda}

\newcommand{\s}{\ensuremath{\mathfrak{s}}} 
\newcommand{\cc}{\ensuremath{\mathfrak{c}}} 
\newcommand{\spinc}{\ensuremath{spin^{\mathfrak{c}}}}
\newcommand{\cl}{\boldsymbol{\mathsf{c}}} 

\newcommand{\Z }{\ensuremath{\mathbb {Z}}}
\newcommand{\R }{\ensuremath{\mathbb {R}}}
\newcommand{\C }{\ensuremath {\mathbb {C}}}
\newcommand{\N }{\ensuremath {\mathbb {N}}}

\newcommand{\iso }{\ensuremath {\thickapprox }}

\newcommand{\sw}{\ensuremath {\mathcal{S}\mathcal{W}}}
\newcommand{\swa}{\ensuremath {\mathcal{S}\mathcal{W}_{\mathfrak{c}}}}
\newcommand{\swh}{\ensuremath {\mathcal{S}\mathcal{W}^{h}_{\mathfrak{c}}}}
\newcommand{\swn}{\ensuremath {\mathcal{S}\mathcal{W}^{\eta}_{\mathfrak{c}}}}
\newcommand{\sws}{\ensuremath {\mathcal{S}\mathcal{W}^{\sigma}_{\mathfrak{c}}}}
\newcommand{\y}{\ensuremath {\mathcal{Y}\mathcal{M}}}
\newcommand{\yp}{\ensuremath {\mathcal{Y}\mathcal{M}^{+}}}
\newcommand{\yn}{\ensuremath {\mathcal{Y}\mathcal{M}^{-}}}
\newcommand{\Cf }{\ensuremath {\mathcal{C}_{\alpha}}}
\newcommand{\w }{\ensuremath {\omega}}
\newcommand{\cx}{\ensuremath {C_{X}}}

\title{Instability of Reducible Critical Points of the Seiberg-Witten Functional\footnote{research partially supported by FAPESC 2568/2010-2}}
\author{ Celso M. Doria \\ Depto. de Matem\'atica, UFSC}
\keywords{seiberg-witten, parallel spinor, yamabe invariant,  \\ MSC 58J05 , 58E50, 58Z99, 58Z05}
\maketitle
\begin{abstract}
\noindent The Euler-Lagrange equations for the variational approach to the Seiberg-Witten equations always admit reducible solutions. In this context, the existence of unstable  reducible solutions is achieved by  assuming the existence of a parallel spinor or the negativeness of a Perelman-Yamabe type of invariant defined for  a $\spinc$-structure. 
\end{abstract}

\section{\bf{Introduction}}

Let  $(M,g)$ be a closed riemannian four manifold with scalar curvature $k_{g}$. By considering the least eigenvalue $\lambda_{g}$ of the operator $\triangle_{g}+\frac{k_{g}}{4}$, where 
$\triangle_{g}=d^{\ast}d$ is the Laplace-Beltrami operator associated to $g$,  Perelman introduced in  ~\cite{Pe01,Pe02}  the smooth invariant 

\begin{equation}
\bar{\lambda}(M)=sup_{g\in\mathscr{M}}\lambda_{g}[vol(M,g)]^{1/2}
\end{equation}

\noindent where $\mathscr{M}$ is the space of $C^{\infty}$-metrics  on $M$. Let $[g]$ be the conformal class of $g\in\mathscr{M}$,  Kobayashi ~\cite{Kob} and Schoen ~\cite{RS87} independently introduced the smooth manifold invariant 

\begin{equation}
\mathcal{Y}(M)=\sup_{[g]}\inf_{g}\frac{\int_{M}k_{g}dv_{g}}{vol(M,g)^{1/2}} .
\end{equation}

\noindent Assuming $\mathcal{Y}(M)<0$,  Akutagawa-Ishida-Le Brun proved  ~\cite{AIL} the equality $\bar{\lambda}(M)=\mathcal{Y}(M)$. A similar quantity turns up by measuring the instability of reducible critical points of 
the Seiberrg-Witten  functional, though in this case it depends on a $\spinc$-structure on $M$.   
There exist smooth 4-manifolds admitting a $\spinc$ structure $\cc$ such that the Seiberg-Witten invariant $SW(\cc)\ne 0$.  
These $\spinc$ structures are named basic classes and they are in the realm of the 4-dim differential topology.  The space $Spin^{\cc}(M)$ of $\spinc$ structures on $M$ might be identified with

\begin{equation}\label{E:00}
\{ \cc=\alpha_{\cc}+\beta_{\cc} \in H^{2}(X,\Z)\oplus H^{1}(X,\Z_{2}) \mid w_{2}(X)=
\alpha \ mod\ \ 2\}.
\end{equation}

\vspace{05pt}

\noindent  From the analytical point of view, a basic classes carries a $\swa$-monopole, which is a special solution of a partial differential equation, as it will be defined next. The motivation for this research was to use variational techniques to  measure the instability of  reducible critical points for the Seiberg-Witten functional; the space of reducible solutions is diffeomorphic to the jacobian torus $\jx=\frac{H^{1}(M,\R)}{H^{1}(M,\Z)}$. 
Based on the fact that the Seiberg-Witten invariants are also  expectation values of a $N=4$ supersymmetric  twisted gauge theory,  ~\cite{LaLo},
one might believe that either there exists a monopole or $\jx$ achieves the minimum energy.  

 The isomorphisms  $Spin^{c}_{4}=(SU_{2}\times SU_{2}\times U_{1})/ _{\Z_{2}}$  and  $Spin^{c}_{3}=U_{2}=(SU_{2}\times U_{1})/ \Z_{2}$ induce the 
 representations  $\rho_{\pm}:Spin_{4} \rightarrow U_{2}=(SU_{2}\times U_{1})/ \Z_{2}$.  Let $P$ be the $\spinc$-principal bundle over $M$ induced by the class $\cc\in Spin^{c}(M)$ and 
 $\csa^{\pm}=P\times_{\rho_{\pm}}\C^{2}$. In practice, a $\spinc$-structure on $M$ is means the existence of  a pair of rank 2 complex 
 vector bundles $\csa^{\pm}$, which fibers are $Spin^{c}_{4}$-modules,  and isomorphisms $det(\csa^{+})=det(\csa^{-})=\la$, where $det(\csa^{\pm})$ are the determinant line bundle such that $c_{1}(\la)=\alpha_{\cc}\in H^{2}(M,\Z)$. 
 Let $\vsa$ the space of sections on $\csa^{+}$ and $\Aa$  be the space of $U_{1}$-connections 1-forms. Each $A\in \Aa$ induces a covariant derivative $\nabla^{A}:\Omega^{0}(\la)\rightarrow\Omega^{1}(\la)$ on $\la$. 
  E.Witten introduced in  ~\cite{Wi94} the  coupled system of $1^{st}$-order PDE ($SW$-monopole eqs.),  

\begin{equation}\label{E:01}
\begin{aligned}
 D^{+}_{A}\phi=0,\quad &(2.1)\\
 F^{+}_{A}=\sigma(\phi),\quad &(2.2),
\end{aligned}
\end{equation}

\vspace{05pt}

\noindent  where   $\phi\in\vsa$, $D^{+}_{A}$ is the positive component of the Dirac
operator, $F^{+}_{A}$ is the self-dual component of the curvature
$F_{A}$ and $\sigma:\vsa\rightarrow\Omega^{2}_{+}(i\R)$ is the self-dual 2-form 

\begin{equation*}
\sigma(v)(X,Y)=<X.Y.v,v>+\frac{1}{2}<X,Y>\mid v\mid^{2}.
\end{equation*}

\noindent performing the coupling  between a self-dual 2-form $F_{A}$ and a positive spinor field $v$; $ \mid\sigma(v)\mid^{2}=\frac{1}{4}\mid\ v\mid^{4}$.  The configuration space is $\ca=\Aa\times\vsa$. 

\begin{definition}
An element $(A,\phi)$ is a $\swa$-monopole if it verifies   the $\sw$-equations   ~(\ref{E:01}). There  are  two kinds of $\swa$-monopoles 
 (i) irreducible if $\phi\ne 0$ and (ii) reducible if $\phi=0$. 
\end{definition}

\noindent  The  irreducibles exist only   for a finite number  of classes in $Spin^{c}(M)$.  The monopole eqs. ~(\ref{E:01}) fits in a variational formulation whose Euler-Lagrange eqs. are the
 $2^{nd}$-order $\sw$-equations  

\begin{equation}\label{E:012}
\begin{aligned}
&d^{*}F_{A} \ +\  4iIm(<\nabla^{A}\phi,\phi>=0,\\
&\Delta_{A} \phi \ +\  \frac{\mid \phi \mid^{2}\ +\ k_{g}}{4}\phi=0.
\end{aligned}
\end{equation}

\noindent For that matter, $\jx=\{(A,0)\in\ca\mid d^{\ast}F_{A}=0\}$ is the solution set of ~(\ref{E:012}) corresponding to those connections whose curvature is harmonic,  and whose existence is guarantee by Hodge theory. Later, it will be shown that monopoles are the ground state of the theory and are also solutions of eqs ~(\ref{E:012}). In order to measure the instability, for each $\cc\in Spin^{c}(M)$,  we introduced 

\begin{equation}\label{E:0123}
\bar{\lambda}^{\cc}(M)=\sup_{A\in\jx}\left\{ \sup_{g\in\mathscr{M}}\lambda^{\cc}_{g}(A).[vol(M,g)]^{1/2} \right\} 
\end{equation}

\noindent where $\mathscr{M}$ is the space of riemannian metrics on $M$. Thus, $\jx$ is defined to be unstable if $\bar{\lambda}^{\cc}(M)<0$.

\begin{theorem}\label{T:01}
Assume $k_{g}$ is not non-negative. If there exists an irreducible solution $(A,\phi)$ of eqs ~(\ref{E:012}), then $\jx$ is unstable.
\end{theorem}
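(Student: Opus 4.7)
The plan is to interpret $\lambda^{\cc}_g(A)$, by analogy with Perelman's invariant $\lambda_g$, as the bottom of the spectrum of the Schr\"odinger-type operator $\Delta_{A}+k_g/4$ acting on sections of $\pcsa$ -- this is the block of the Hessian of the Seiberg-Witten functional in the spinor direction at the reducible critical point $(A,0)$ -- and then to exhibit a test spinor whose Rayleigh quotient is strictly negative, obtained directly from the given irreducible solution of (\ref{E:012}).

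The computational heart is a single integration by parts. Taking the $L^{2}$-pairing of the second equation in (\ref{E:012}) with $\phi$ yields
\begin{equation*}
\int_M |\nabla^{A}\phi|^{2}\,dv_g + \int_M \frac{k_g}{4}|\phi|^{2}\,dv_g \;=\; -\frac{1}{4}\int_M |\phi|^{4}\,dv_g,
\end{equation*}
whose right-hand side is strictly negative because $\phi\not\equiv 0$. This identity also explains the role of the hypothesis that $k_g$ fail to be non-negative: if it held, the left-hand side would already be non-negative and the identity would force $\phi\equiv 0$, precluding the existence of an irreducible solution in the first place. The variational characterization of the lowest eigenvalue then gives $\lambda^{\cc}_g(A)<0$ at this particular pair $(A,g)$, and hence $\lambda^{\cc}_g(A)\cdot[\mathrm{vol}(M,g)]^{1/2}<0$.

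To deduce $\bar{\lambda}^{\cc}(M)<0$ one must upgrade this single negative value to a negative supremum over all $A'\in\jx$ and all $g'\in\mathscr{M}$. I would proceed in three moves: (i) use gauge invariance of the spectrum of $\Delta_{A}+k_g/4$ to replace $A$ by a gauge-equivalent representative whose harmonic part is a well-defined point of $\jx$; (ii) use the conformal scaling $g'\mapsto t^{2}g'$, under which the Laplace-Beltrami and scalar curvature scale as $t^{-2}$ while $[\mathrm{vol}]^{1/2}$ scales as $t^{2}$, so that $\lambda^{\cc}_{g'}(A)\cdot[\mathrm{vol}(M,g')]^{1/2}$ is scale invariant and we may normalize to unit volume; and (iii) transport $\phi$ along smooth families of harmonic connections in $\jx$ and of unit-volume metrics, estimating the resulting Rayleigh quotient uniformly.

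The decisive obstacle is precisely this globalization. The spectrum of the twisted Laplacian depends nontrivially on the holonomies that parametrize the torus $\jx$ and on the full conformal geometry of $g'$, so a strict inequality at a single point does not by itself bound a supremum over a noncompact family. I expect that closing this gap requires combining the Rayleigh-quotient estimate above with a Weitzenb\"ock-type curvature inequality (in the spirit of LeBrun's $\spinc$ scalar-curvature estimates) relating $\int |F_{A}^{+}|^{2}$ and $\int k_g^{2}$ to a topological quantity attached to $\cc$, so as to produce a uniform negative upper bound that survives the supremum over $\jx\times\mathscr{M}$.
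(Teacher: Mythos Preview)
Your core computation is exactly the paper's: integrate the second Euler--Lagrange equation against $\phi$ to obtain
\[
\int_M\Bigl(|\nabla^A\phi|^2+\frac{k_g}{4}|\phi|^2\Bigr)\,dv_g \;=\; -\frac{1}{4}\int_M|\phi|^4\,dv_g,
\]
and conclude $\lambda^{\cc}_g(A)<0$ from the Rayleigh quotient.

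The paper then inserts one step you omit. Cauchy--Schwarz gives $\int_M|\phi|^2\,dv_g\le[\mathrm{vol}(M,g)]^{1/2}\bigl(\int_M|\phi|^4\,dv_g\bigr)^{1/2}$, and since $\lambda^{\cc}_g(A)<0$ one obtains
\[
\lambda^{\cc}_g(A)\cdot[\mathrm{vol}(M,g)]^{1/2}\;\le\;-\frac{1}{4}\Bigl(\int_M|\phi|^4\,dv_g\Bigr)^{1/2}\;<\;0,
\]
so the volume-normalized quantity itself is bounded by an explicit negative number. That is the entire argument in the paper; it then writes ``Hence, $\bar{\lambda}^{\cc}(M)\le 0$'' and stops.

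The globalization obstacle you flag---passing from this single $(A,g)$ to the supremum over all $A'\in\jx$ and all $g'\in\mathscr{M}$---is not carried out in the paper. Its argument bounds $\lambda^{\cc}_g(A)$ only for the connection $A$ attached to the given irreducible solution (which need not lie in $\jx$, since $d^{\ast}F_A=-4i\,\mathrm{Im}\langle\nabla^A\phi,\phi\rangle$ is generally nonzero when $\phi\ne 0$) and only for the given metric $g$. None of your moves (i)--(iii), nor any Weitzenb\"ock/LeBrun-style uniform estimate, appears. Your diagnosis that this is the genuine gap is accurate; the paper simply does not address it, and even its final sentence records only $\bar{\lambda}^{\cc}(M)\le 0$ rather than the strict inequality the definition of ``unstable'' requires.
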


\begin{theorem}\label{T:02}
If $\cc\in Spin^{c}(M)$ admits a parallel spinor and the Yamabe invariant satisfies $Y(M)<0$, then $\jx$ is unstable.
\end{theorem}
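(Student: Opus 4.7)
The plan is to use the parallel spinor as a trial section to reduce the twisted eigenvalue $\lambda^{\cc}_{g}(A)$ to Perelman's scalar eigenvalue $\lambda_{g}$, and then invoke the Akutagawa--Ishida--Le Brun identity $\bar{\lambda}(M)=\mathcal{Y}(M)$ to convert the hypothesis $\mathcal{Y}(M)<0$ into the required negativity of $\bar{\lambda}^{\cc}(M)$.

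First I would fix a metric $g_{0}$ and a connection $A_{0}\in\jx$ for which a parallel spinor $\phi_{0}\in\vsa$ exists, i.e.\ $\nabla^{A_{0}}\phi_{0}=0$. Since $\nabla^{A_{0}}|\phi_{0}|^{2}=2\,\mathrm{Re}\langle\nabla^{A_{0}}\phi_{0},\phi_{0}\rangle$ vanishes, the pointwise norm $|\phi_{0}|$ is constant and I normalize it to $1$. For an arbitrary real $u\in C^{\infty}(M)$ the trial spinor $\psi=u\phi_{0}$ satisfies $\nabla^{A_{0}}\psi=du\otimes\phi_{0}$, and a direct computation gives
\[
\int_{M}\!\left(|\nabla^{A_{0}}\psi|^{2}+\tfrac{k_{g}}{4}|\psi|^{2}\right)dv_{g}=\int_{M}\!\left(|du|^{2}+\tfrac{k_{g}}{4}u^{2}\right)dv_{g},\qquad \int_{M}|\psi|^{2}dv_{g}=\int_{M}u^{2}\,dv_{g}.
\]
Thus the Rayleigh quotient of $\psi$ for the twisted operator $\triangle_{A_{0}}+k_{g}/4$ coincides with that of $u$ for Perelman's operator $\triangle_{g}+k_{g}/4$. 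Taking the infimum over $u$ yields the key comparison $\lambda^{\cc}_{g}(A_{0})\le\lambda_{g}$. Multiplying by $[vol(M,g)]^{1/2}$ and supping over $g$,
\[
\sup_{g\in\mathscr{M}}\lambda^{\cc}_{g}(A_{0})\,[vol(M,g)]^{1/2}\;\le\;\bar{\lambda}(M)\;=\;\mathcal{Y}(M)\;<\;0,
\]
where AILB is used in the second step (applicable because $\mathcal{Y}(M)<0$).

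It remains to obtain the bound uniformly over $A\in\jx$. Writing $A=A_{0}+ia$ with $a$ a real harmonic 1-form representing a class in $H^{1}(M,\mathbb{R})/H^{1}(M,\mathbb{Z})$, the unaltered trial $u\phi_{0}$ is no longer $\nabla^{A}$-parallel, and the Rayleigh quotient picks up an extra non-negative term $\int |a|^{2}u^{2}\,dv_{g}/\int u^{2}\,dv_{g}$. To absorb it I would allow complex-valued test functions and twist the phase by the local primitive of $ia$, so that $du$ is effectively replaced by $(d+ia)u$; the computation then reduces to the least eigenvalue of the magnetic-type scalar operator $(d+ia)^{*}(d+ia)+k_{g}/4$. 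The main obstacle I expect is proving that this twisted scalar eigenvalue is still bounded above by $\lambda_{g}$ uniformly as $A$ ranges over the compact torus $\jx/\mathcal{G}_{\cc}$, which amounts to controlling the magnetic contribution by the curvature-driven Yamabe minimum. Once this uniform comparison is in place, the chain $\bar{\lambda}^{\cc}(M)\le\bar{\lambda}(M)=\mathcal{Y}(M)<0$ closes and $\jx$ is unstable.
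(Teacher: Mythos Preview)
Your strategy—use the parallel spinor to build trial sections $u\phi_{0}$, reduce the spinor Rayleigh quotient to the scalar one, and then invoke the Akutagawa--Ishida--LeBrun identity $\bar\lambda(M)=\mathcal{Y}(M)$—is precisely the route the paper takes, though the paper compresses it to a single sentence in the Conclusion without displaying the trial-section computation you write out.

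There is, however, a gap you do not flag. The covariant derivative $\nabla^{A_{0}}$ on $\csa^{+}$ depends on the Riemannian metric through the Levi--Civita (spin) part, not only on the $U_{1}$-connection $A_{0}$. Your identity $\nabla^{A_{0}}(u\phi_{0})=du\otimes\phi_{0}$ therefore holds only for the specific metric $g_{0}$ for which $\phi_{0}$ is parallel, and the comparison $\lambda^{\cc}_{g}(A_{0})\le\lambda_{g}$ is established only at $g=g_{0}$. The subsequent step of ``supping over $g$'' to reach $\sup_{g}\lambda^{\cc}_{g}(A_{0})[\mathrm{vol}(M,g)]^{1/2}\le\bar\lambda(M)$ is thus unjustified as written; you would need a parallel spinor for \emph{every} metric, which the hypothesis does not provide. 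The paper's one-line argument does not address this point either, so the gap is shared. Your treatment of the supremum over $A\in\jx$ is honest—you correctly isolate the extra magnetic term and do not claim to have absorbed it; the paper appears to lean on Proposition~4.1.2 for this step, but that proposition as stated really only handles the case where the closed form $\omega$ is exact, so the difficulty persists there as well.
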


\begin{theorem}\label{T:03}
If $\cc\in Spin^{c}(M)$  is a basic class admitting a parallel spinor, and $\alpha^{2}_{\cc}>0$, then $\bar{\lambda}^{\cc}(M)<-\pi\sqrt{\alpha^{2}_{\cc}}$. 
\end{theorem}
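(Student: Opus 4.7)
The plan is to combine three ingredients: a variational test using the parallel spinor, LeBrun's Seiberg--Witten scalar curvature estimate, and the Akutagawa--Ishida--LeBrun equality $\bar{\lambda}(M)=\mathcal{Y}(M)$ recalled in the introduction.

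First, I would use the basic-class hypothesis to constrain $\mathcal{Y}(M)$. For every generic metric $g$, $SW(\cc)\ne 0$ produces an irreducible monopole $(A_g,\phi_g)$ of~(\ref{E:01}). Pairing the Seiberg--Witten Weitzenb\"ock identity with $\phi_g$ and integrating gives $\int_M |\phi_g|^4\,dv_g\le\int_M(k_g^-)^2\,dv_g$ after Cauchy--Schwarz; since $|\sigma(\phi)|^2=\tfrac14|\phi|^4$ and the curvature identity yield $\int_M|\phi_g|^4\,dv_g=16\pi^2(c_1^+(\la))^2\ge 16\pi^2\alpha_{\cc}^2$, specializing to a Yamabe representative in each conformal class produces $\mathcal{Y}([g])\le -4\pi\sqrt{\alpha_{\cc}^2}$, so $\mathcal{Y}(M)\le -4\pi\sqrt{\alpha_{\cc}^2}<0$.

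Second, I would use the parallel spinor to reduce the spinor Rayleigh quotient to a scalar one. Let $\phi_0\in\vsa$ be the parallel spinor, normalized so $|\phi_0|\equiv 1$ relative to a reference pair $(g_0,A_0)$. For any scalar test function $f:M\to\R$, the test spinor $\psi=f\phi_0$ satisfies $|\nabla^{g_0,A_0}\psi|^2=|df|^2$ because $\nabla\phi_0=0$ annihilates both the cross term and the quadratic $f^2|\nabla\phi_0|^2$ contribution. Hence the spinor Rayleigh quotient of $\nabla^*\nabla+k_{g_0}/4$ evaluated at $\psi$ coincides with the scalar Rayleigh quotient of $\Delta+k_{g_0}/4$ evaluated at $f$, and taking the infimum over $f$ gives $\lambda^{\cc}_{g_0}(A_0)\le\lambda_{g_0}$. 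Combining with the AIL equality (applicable because step one yields $\mathcal{Y}(M)<0$) gives at the reference pair the estimate $\lambda^{\cc}_{g_0}(A_0)\,[vol(M,g_0)]^{1/2}\le\mathcal{Y}(M)\le -4\pi\sqrt{\alpha_{\cc}^2}$.

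The remaining step---which I expect to be the main obstacle---is to promote the bound from the single reference pair $(g_0,A_0)$ to the full double supremum defining $\bar{\lambda}^{\cc}(M)$. Because $\phi_0$ is parallel it is nowhere vanishing, so $\pcsa$ splits orthogonally as $\underline{\C}\langle\phi_0\rangle\oplus L$ with $c_1(L)=\alpha_{\cc}$, making $\phi_0$ available as a global unit-length test section for every metric. For arbitrary $(g,A)\in\mathscr{M}\times\jx$ the same computation with $\psi=f\phi_0$ now produces a scalar Rayleigh quotient for the modified operator $\Delta+V_{g,A}+k_g/4$ with nonnegative potential $V_{g,A}=|\nabla^{g,A}\phi_0|_g^2$; harmonicity of $F_A$ for $A\in\jx$ together with the pointwise Weitzenb\"ock identity applied to $\phi_0$ should absorb $V_{g,A}$ into the scalar curvature contribution and deliver the uniform comparison $\lambda^{\cc}_g(A)\,[vol(M,g)]^{1/2}\le\mathcal{Y}(M)\le -4\pi\sqrt{\alpha_{\cc}^2}$. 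The factor-of-four gap between this bound and the target $-\pi\sqrt{\alpha_{\cc}^2}$ then delivers the required strict inequality $\bar{\lambda}^{\cc}(M)<-\pi\sqrt{\alpha_{\cc}^2}$.
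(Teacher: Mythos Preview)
Your route is genuinely different from the paper's. The paper does \emph{not} invoke LeBrun's scalar-curvature estimate or the Akutagawa--Ishida--LeBrun equality at all. Instead it plugs the monopole spinor $\phi$ itself directly into the Rayleigh quotient of $L_{A}=\triangle_{A}+\tfrac{k_{g}}{4}$: the second Euler--Lagrange equation~(\ref{E:012}) integrated against $\phi$ gives
\[
\int_{M}\Bigl(\mid\nabla^{A}\phi\mid^{2}+\tfrac{k_{g}}{4}\mid\phi\mid^{2}\Bigr)dv_{g}=-\tfrac{1}{4}\int_{M}\mid\phi\mid^{4}dv_{g},
\]
so $\lambda^{\cc}_{g}(A)\int\mid\phi\mid^{2}\le -\tfrac14\int\mid\phi\mid^{4}$. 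Cauchy--Schwarz on $\int\mid\phi\mid^{2}$ then yields $\lambda^{\cc}_{g}(A)[vol(M,g)]^{1/2}\le -\tfrac14\bigl[\int\mid\phi\mid^{4}\bigr]^{1/2}$, and the monopole equation $F^{+}_{A}=\sigma(\phi)$ together with $\int\mid F^{+}_{A}\mid^{2}\ge 4\pi^{2}\alpha_{\cc}^{2}$ finishes the numerical bound. So the paper's argument is entirely self-contained and never reduces to the scalar operator; your detour through $\mathcal{Y}(M)$ and $\bar{\lambda}(M)$ buys a sharper constant ($-4\pi$ versus $-\pi$) at the price of importing two external theorems.

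There is, however, a real gap that you correctly flag and that the paper simply glosses over: both arguments only control $\lambda^{\cc}_{g}(A)$ at the \emph{particular} connection coming from the monopole (or, in your case, the particular $(g_{0},A_{0})$ carrying the parallel spinor), whereas $\bar{\lambda}^{\cc}(M)$ is a supremum over all $A\in\jx$ and all metrics. Your proposed fix---absorbing the potential $V_{g,A}=\mid\nabla^{g,A}\phi_{0}\mid^{2}$ via a Weitzenb\"ock identity---does not work as stated: for $A=A_{0}+a$ with $a$ a nonzero harmonic $1$-form, $\nabla^{A}\phi_{0}=a\otimes\phi_{0}$ contributes $\mid a\mid^{2}$, and there is no curvature term available to cancel it since $da=0$. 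The paper offers no mechanism either; it simply asserts the conclusion for $\bar{\lambda}^{\cc}(M)$ after deriving it for the monopole $A$. It is also worth noting that the parallel-spinor hypothesis, which you use essentially, plays no visible role in the paper's own argument for Theorem~\ref{T:03}.
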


A class $\cc\in Spin^{c}(M)$ admitting a parallel spinor imposes strong restriction on $M$ ~(\cite{AM97},~\cite{Fr}). Assuming $\pi_{1}(M)=0$ and $M$ being irreducible as cartesian product, it turn out that either $M$ is K\"{a}hler or $M$ is spin Ricci-flat.  The former  case is characterized  by the surjectivity of the Ricci tensor and the existence of an integrable complex structure $J$ on $M$ such that $\alpha_{\cc}=c_{1}(J)$ or $-c_{1}(J)$. In the last,  the Ricci tensor must be null and the manifold spin.  The author is not  aware of any sort of classification theorem of spin Ricci-flat 4-manifolds, but its  importance for physicists.
It is a long standing problem to find  examples of Ricci-flat manifolds with holonomy $SO_{n}$.


\section{Background}
Consider $\pi:E\rightarrow M$ a vector bundle with structural group $G$ and denote $F(E)$  the $G$-principal bundle of frames on $E$.  

\subsection{Gauge Group}

Consider $G$ a Lie group with Lie algebra $\mathfrak{g}$. The Gauge group  $\G_{P}$ of a principle $G$-bundle $P_{G}$ is the set of $G$-equivariant automorphism $\Phi:P_{G}\rightarrow P_{G}$ such that $\pi\circ\Phi=\pi$. A gauge transformation $\Phi$ is better described as  a map $s:P_{G}\rightarrow G$ such that $\Phi(p)=p.s(p)$, $s(p.g)=g^{-1}.s(p).g$.  Taking the adjoint action $Ad_{g}:G\rightarrow G$, $Ad_{g}(x)=g^{-1}.x.g$, and defining the bundle 
$Ad(G)=P_{G}\times_{Ad} G$, the gauge group $\G$ is the space of sections of $Ad(P)$.  The representation $ad:G\rightarrow End(\mathfrak{g})$,   $ad(g)=g^{-1}vg$,  induces the 
associated vector bundle  $ad(\mathfrak{g})=P_{G}\times_{ad}\mathfrak{g}$. If $G$ is abelian, then $Ad(P)=Map(M,G)$; e.g.: $G=U_{1}$, $\G=Map(M,U_{1})$ and $ad(\mathfrak{u}_{1})=i\R$. 
The group $\G_{P}$ also acts on an associated vector bundle $E=P\times_{\rho} V$, $\rho:G\rightarrow End(V)$. The homotopy type of $G_{P}$ depends on the homotopy type of $P$.

\subsection{Spin and Spin$^{c}$ Structures on $M$} 

 Whenever $M$ admits a spin structure or a $\spinc$  structure it carries a Dirac operator  useful to study geometric and topological properties  on $M$ by analytical methods. 
  In order to define such structures we consider the Lie groups $Spin_{4}=SU_{2}\times SU_{2}$, recalling that $\overline{Ad}:Spin_{4}\rightarrow SO_{4}$ is the universal covering map,  and 
  $Spin^{c}_{4}=Spin_{4}\times_{\Z_{2}} U_{1}$.  Let $\pi:F(M)\rightarrow M$ be the frame bundle of $M$. A spin structure on $M$ is 
a  principal $Spin_{4}$-bundle $P^{\s}$ such that the projection  $\pi':P^{\s}\rightarrow M$ lifts to a map 
  $\zeta:P^{\s}\rightarrow F(M)$ satisfying the following conditions \\
  (i) $\zeta(p.g)=\zeta(p).\overline{Ad}(g)$, for all $p\in P^{\s}(E)$ and $g\in Spin_{4}$, \\
  (ii) $\pi\circ \zeta=\pi':P^{\s}\rightarrow M$ 
    
\noindent  It turns out that  $M$ admits a spin structure if and only if $w_{2}(M)=0$; in this case  the space of $spin$-structures on $M$ is $Spin(M)=H^{1}(M,\Z_{2})$. All $spin$ structure on a smooth 4-manifold $M$  carries a spin vector bundle $\svb=P^{\s}\times_{\rho_{\s}} \mathbb{H}^{2}$ over $M$, whose fibers are  a $Cl_{4}$-module ($Cl_{4}$ is the real Clifford Algebra isomorphic to $M_{2}(\mathbb{H})$).  From the representation theory of Clifford Algebras, there exist a decomposition $\svb=\svb^{+}\oplus\svb^{-}$ induced by inequivalent representations $\rho_{\pm}:Spin_{4}\rightarrow\mathbb{H}$. 
 In general, $M$ may not admit a $spin$ structure  because $w_{2}(M)\ne0$, but it always admits a $\spinc$ structure because there exists a class $\alpha\in H^{2}(M,\Z)$ such that $w_{2}(X)\equiv \alpha\ mod\ 2$.  Indeed, a $\spinc$-structure on $M$ corresponds to define an almost complex structure on $M\backslash {\{pt\}}$. When  $M$ is  a spin manifold we have  $c_{1}(\la)=\alpha_{\cc}\in H^{2}(M,2\Z)$. In this case,  the bundles $\svb$ and $\la^{1/2}$ are globally defined and 
$\csa=\svb\otimes(\la)^{1/2}$, where  $(\la)^{1/2}$ is the square  root bundle of $\la$.  When $w_{2}(M)\ne 0$ the tensor product $\csa=\svb\otimes(\la)^{1/2}$ is globally defined, though
 the bundles  $\svb$ and $(\la)^{1/2}$ are not. The bundle $\csa$ inherits the decomposition  $\csa=\csa^{+}\oplus\csa^{-}$, where $\csa^{\pm}$ are the $(\pm)$-complex spinor bundles of rank $2$. 
Moreover,

\begin{equation*}
\begin{aligned}
c_{1}(\csa^{+})&=c_{1}(\la),\ c_{2}(\csa^{+})=\frac{1}{4}[c^{2}_{1}(\la)-2\chi(M)-3\sigma(M)],\\
c_{1}(\csa^{-})&=c_{1}(\la),\ c_{2}(\csa^{-})=\frac{1}{4}[c^{2}_{1}(\la)+ 2\chi(M)-3\sigma(M)].
\end{aligned}
\end{equation*}

\section{Geometric Structures}

A brief introduction on covariant derivatives and curvature is given in order  to fix the concepts and the notations needed along the text.

 \subsection{Covariant Derivatives and Connections 1-forms on $\csa^{\pm}$}
 
Let's consider the general case of a smooth vector bundle $E$ over $M$. Let $\mathcal{A}_{E}$ be the space of connection 1-forms on $E$.  A covariant derivative on  a vector bundle $E$ over $M$ is a $\R$-linear operator $\nabla:\Omega^{0}(E)\rightarrow\Omega^{1}(E)$ satisfying the Leibnitz rule:  for all $f:M\rightarrow\R$ and $ V\in\Omega^{0}(E)$,
 
 \begin{equation*}
 \nabla(fV)=df\wedge V+f\wedge\nabla V.
 \end{equation*}
 
\noindent Using the exterior derivative $d:\Omega^{p}(M)\rightarrow\Omega^{p+1}$, it can be extended to a linear operator $d^{\nabla}:\Omega^{p}(E)\rightarrow\Omega^{p+1}(E)$, by

\begin{equation*}
d^{\nabla}(V\omega)=\nabla V\wedge\omega + V\otimes d\omega.
\end{equation*}

\noindent  $\mathcal{A}_{E}$   is an afim space which turns out to be  the vector space  $\Omega^{1}(ad(\mathfrak{g}))$ by fixing an origin at $\nabla^{0}\in\mathcal{A}_{E}$.
 Any covariant derivative $\nabla^{A}$ can be written as  $\nabla^{A}=\nabla^{0}+A$, $A\in \Omega^{1}(ad(\mathfrak{g}))\overset{loc.}{=}\Omega^{1}(M)\otimes \mathfrak{g}$. Covariant derivatives and connection 1-forms are equivalent.
 The  group $\G$ acts on $\mathcal{A}_{E}$ by $g.\nabla= g^{-1}\nabla g$, so inducing on $\Omega^{1}(ad(\mathfrak{g}))$  the $\G$-action $g.A=g^{-1}Ag+g^{-1}dg$.
  Fix a local chart $U\subset M$, let $\nabla:\Omega^{0}(TM)\rightarrow \Omega^{1}(TM)$ be the riemannian connection on $M$ and   $\beta_{U}=\{e_{i}\mid 1\le i\le 4\}$  be a  local orthonormal frame 
  of $TM$ defined on $U$ having the following properties: for all $i,j$ ($\nabla_{i}=\nabla_{e_{i}}$)\\
(i) $[e_{i},e_{j}]=\nabla_{i}e_{j}-\nabla_{j}e_{i}=0$, \\
(ii) $\nabla_{i}e_{k}=\sum_{l}\Gamma^{l}_{ik}e_{l}$,\\



\noindent  The covariant derivative operator is locally given by    $\nabla=\sum_{i}(\nabla_{i})dx^{i}=d+\Gamma$, where 
$\nabla_{i}=\partial_{i}+\Gamma_{i}$  and $\Gamma=\sum_{i}\Gamma_{i}dx^{i}\in \Omega^{1}(ad(\mathfrak{so}_{4}))$ is the connection 1-form. The set of  linear maps 
 $e_{k}\wedge e_{l}:\R^{4}\rightarrow\R^{4}$, given by
  
  \begin{equation*}
(e_{k}\wedge e_{l})(v)=<v,e_{l}>e_{k}-<v,e_{k}>e_{l}, \quad (\mathfrak{so}_{4}\simeq \Lambda^{2}(\R^{4})),
\end{equation*}

\noindent    defines a   $\mathfrak{so}_{4}$ basis on which  $\Gamma_{i}=\sum_{k,l}(\Gamma_{i})_{kl}(e_{k}\wedge e_{l})$.
The riemannian connection on $(M,g)$ induces a connection on $\svb$ as we shown next. Let $\C l(M,g)$ be the Clifford Algebra Bundle and $\cl:TM\rightarrow \C l(M,g)$ be the Clifford map perfoming the inclusion.
 A $\Omega^{0}(\C l(M,g))$-module structure is defined on  $\Omega^{0}(\svb)$ by the pointwise product $(\gamma.\phi)(x)=\gamma(x).\phi(x)$, for all  $\gamma\in\Omega^{0}(\C l(M,g))$ and $\phi\in\Omega^{0}(\svb)$.  In order to describe  a connection (locally defined) on $\svb$ let's consider  $\gamma_{i}=\cl(e_{i})$. The whole procedure to induce the connection  on $\svb$ relies on the lie algebra isomorphism  $\Theta: \mathfrak{so}_{n}\rightarrow \mathfrak{spin}_{n}$,
$\Theta(e_{k}\wedge e_{l})=\frac{1}{2}\gamma_{k}. \gamma_{l}$ ~(\cite{LM}, prop 6.1).
  In this way, the Christoffel's symbols of $M$   induce on $\svb$ 
  the operator $\Gamma^{\s}_{i}:\Omega^{0}(\svb)\rightarrow \Omega^{0}(\svb)$, $\Gamma^{\s}_{i}=\frac{1}{2}\sum_{l,k}\Gamma^{k}_{il}(\gamma_{k}.\gamma_{l})$.
The spin connection 1-form on $\svb$ is  $\Gamma^{\s}=\sum_{i}\Gamma^{\s}_{i}dx^{i}$, it induces  the covariant derivative $\nabla^{\s}:\Omega^{0}(\svb)\rightarrow\Omega^{1}(\svb)$,
$\nabla^{\s}=d+\Gamma^{\s}$. 
  A covariant derivative operator $\nabla^{A}:\Omega^{0}(\csa)\rightarrow \Omega^{1}(\csa)$
is locally defined  on $\csa\overset{loc.}{=}\svb\otimes \la^{1/2}$   by taking the spin connection $\nabla^{\s}$ on $\svb$ and a $U_{1}$-connection $\nabla^{A}$
 on $\la^{1/2}$,  as follows: let $\psi\in\Omega^{0}(\svb)$, $\lambda\in\Omega^{0}(\la^{1/2})$ and 
$\psi\otimes\lambda\in \Omega^{0}(\csa)$,  

\begin{equation}\label{E:471}
\nabla^{A}(\psi\otimes\lambda)=\nabla^{\s}\psi\otimes\lambda+\psi\otimes\nabla^{A}\lambda.
\end{equation}

\vspace{05pt}

\noindent (it can be patched together to define the operator $\nabla^{A}$ globally).

\subsection{Curvature}  
 
The curvature of a covariant derivative $\nabla$ on $E$ is the $C^{\infty}$-linear operator 
$F=d^{\nabla}\circ d^{\nabla}:\Omega^{0}(E)\rightarrow\Omega^{2}(E)$ defined by 

\begin{equation*}
F(V)(X,Y)=\left(\nabla_{X}\nabla_{Y} -   \nabla_{Y}\nabla_{X} - \nabla_{[Y,X]}\right)V,
\end{equation*}

\noindent for all $V\in\Omega^{0}(E)$ and $X,Y\in\Omega^{0}(TM)$. In a orthonormal frame $\beta^{E}=\{f_{\alpha}\mid 1\le \alpha\le r\}$ on $E$, such that 
$\nabla_{e_{i}}f_{\alpha}=\sum_{\beta}A^{\beta}_{i\alpha}f_{\beta}$, we consider, for each $i,j$, $F_{ij}\in End(E)$ as the operator

\begin{equation*}\label{E:120}
\begin{aligned}
F_{ij}(V)&=F(e_{i},e_{j})(V)= \left(\frac{\partial A_{j}}{\partial x_{i}}-\frac{\partial A_{i}}{\partial x_{j}}+[A_{i},A_{j}]\right)(V), 
\end{aligned}
\end{equation*}

\vspace{05pt}

\noindent 
 Each $A_{i}(x)$ being  a skew symmetric operator $\forall x$ implies $F_{ij}(x0\in End(E_{x})$ is also skew-symmetric.  
  Let $A\in\Omega^{1}(E)$  and $\nabla=d+A$, the curvature 2-forms $F_{A}\in\Omega^{2}(E)$ is $F_{A}=\sum_{i,j}F_{ij}dx^{i}\wedge dx^{j}$.
The gauge group  action on $\Omega^{2}(E)$ is $g.\omega=g^{-1}.\omega.g$ motivated by the fact that curvature of $g.A$ is $g.F_{A}=g^{-1}.F_{A} .g$.
 When $E=TM$,  the  curvature 2-form $R:\Omega^{0}(TM)\rightarrow\Omega^{2}(TM)$ of the riemannian metric is locally written, using the frame $\beta_{U}$, as $R=\sum_{i,j}R_{ij}dx^{i}\wedge dx^{j}$. The components  $R_{ij}(e_{k})=\sum_{l}R^{l}_{ijk}e_{l},\ (R_{ij})_{lk}=R^{l}_{ijk}$ satisfy the identities 

\begin{equation}
\begin{aligned}
&(i)\ R^{l}_{ijk}+R^{l}_{jki}+R^{l}_{kij}=0,\\
&(ii) R^{l}_{ijk}=-R^{l}_{jik}
\end{aligned}\quad
\begin{aligned}
&(iii)\ R^{l}_{ijk}=-R^{k}_{ijl}\\
&(iv)\ R^{l}_{ijk}=R^{j}_{kli}
\end{aligned}
\end{equation}

\vspace{05pt}

\noindent 
Using  the $\mathfrak{so}_{4}$ basis $\{e_{k}\wedge e_{l}\}$ we have $R_{ij}=\sum_{k,l}R^{k}_{ijl}e_{k}\wedge e_{l}$.
 In this way, the curvature 2-form induced on $\svb$ by the riemannian connection on $TM$ is 

\begin{equation*}
R^{\s}=\frac{1}{2}\sum_{k,l}\left(\sum_{i,j}R^{k}_{ijl}dx^{i}\wedge dx^{j}\right)\gamma_{k}.\gamma_{l}\in \Omega^{2}(\svb)
\end{equation*}

\begin{definition}
The Ricci curvature of the riemannian manifold $(M,g)$  is the bilinear form $Ric:\Omega^{0}(TM)\times\Omega^{0}(TM)\rightarrow C^{\infty}(M)$

$$Ric(u,v)=trace_{g}\left[w\to R(u,w)v\right].$$

\vspace{05pt}

\end{definition}

\noindent  Using the frame $\beta_{U}=\{e_{j}\}$ on M,  the Ricci curvature is given by 

\begin{equation*}
Ric(u,v)=g( \sum_{k}R(e_{k},v)e_{k},u), \ \text{$\forall u,v\in TM$.}
\end{equation*}

\noindent  Using the symmetry $Ric_{ij}=Ric_{ji}$, we define  the linear  self-adjoint Ricci operator 
 $Ric:\Omega^{0}(TM)\rightarrow\Omega^{0}(TM)$,  $Ric(u,v)=g(u,Ric(v))$, locally given by  $Ric(v)=\sum_{k}R(e_{k},v)e_{k}$.
 It induces on $\svb$ the operator

 \begin{equation*}
Ric^{\s}(v)=\sum_{k}R(e_{k},v)\gamma_{k}
\end{equation*}

 \noindent So far, it has been showed how the riemannian connection  induces  a connection on $\svb$.  The equation ~(\ref{E:471})   induces a connection on $\csa$ 
 whose curvature 2-form $F_{A}:\Omega^{0}(\csa)\rightarrow\Omega^{2}(\csa)$ locally decomposes into

\begin{equation}\label{E:132}
F_{A}=R^{\s}+if_{A},\quad f_{A}\in\Omega^{2}(M)
\end{equation}

\vspace{05pt}

\noindent The expression ~(\ref{E:132}) reflects the existence of the decomposition $\mathfrak{u}_{2}=\mathfrak{su}_{2}\oplus\mathfrak{u}_{1}$. By projecting the curvature $F_{A}\in\mathfrak{u}_{2}$   
into the sub-algebras the component $\mathfrak{su}_{2}$ gives part of the Riemann tensor of $M$ and the  $\mathfrak{u}_{1}$ component  gives the curvature of $A$ on $\la^{1/2}$. Thus, the curvature induced on $\la$ is $2if_{A}$.


\section{Variational Formulation and $2^{nd}$ Variation}


By fixing  an origin at $\nabla^{0}\in \Aa$  a connection on $\la$ is written as $\nabla^{A}=\nabla^{0}+A$, where $A\in \Omega^{1}(M,i\R)$ is a $\mathfrak{u}_{1}$-valued 1-forms. 
A topology on the configuration space  $\ca=\Aa\times\vsa$  is defined by considering the Sobolev spaces 
  $\Aa= L^{1,2}(\Omega^{1}(M,i\R))$ and $\varGamma(\csa^{+})$ = $L^{1,2}(\varOmega^{0}(X,\csa^{+}))$; the gauge group  is taken to be $\G=L^{2,2}(Map(X,U_{1}))$.
The  $\G$ action on $\ca$   is not free, the isotropy group
are $G_{(A,0)}=\{I\}$ and $\G_{(A,0)}\simeq U_{1}$ for all $A\in \Aa$. An element $(A,\phi)\in\ca$ is named irreducible if $\phi\ne 0$, otherwise is  reducible.
The  subspace of irreducibles $\ca^{\ast}=\{(A,\phi)\in\ca\mid \phi\ne 0\}$ is a universal principal $\G$-bundle over  the moduli space $\B^{\ast}=\ca^{\ast}/\G$.  The quotient space $\B^{\ast}$  has the same homotopy type of  $\C P^{\infty}\times\jx$. The free action of $U_{1}=\{g\in\G\mid \text{g constant}\}$  on $\ca^{\ast}$ defines a principal $U_{1}$-bundle over $\B^{\ast}$ whose first Chern class $c_{1}(\ca^{\ast})=SW(\cc)$ is the generator of subring  corresponding to the cohomology of the  factor $\C P^{\infty}\times\{p\}$ in $H^{\ast}(\B^{\ast};\Z)$ ($p\in\jx$) . 

The riemannian structure on the tangent bundle  $T\ca=\ca\times (\Omega^{1}(i\R)\oplus\vsa)$ is  the product  of the following structures on each component;  \\
(i) on $\Aa$, for all $\eta,\theta\in\Omega^{p}(M,i\R)$,

$$<\eta,\theta>_{}=\int_{M}(\eta\wedge \ast\theta)dv_{g},$$

\noindent recalling that the Hodge operator is minus the usual star operator because the forms take values in $i\R$ instead of $\R$.

\noindent (ii) on $\vsa$, for any sections $V,W\in\vsa$, ($z\in\C$, $\mathfrak{Re}(z)=\frac{z+\bar{z}}{2}$)

\begin{equation*}
<V, W>=\int_{X}\mathfrak{Re}(<V,W>)dv_{g}.
\end{equation*}

\vspace{05pt}

\noindent Thus, the inner product $<,>:T_{(A,\phi)}\ca\times T_{(A,\phi)}\ca\rightarrow \R$ is

\begin{align*}
 <\eta+V,\theta +W>=<\eta,\theta>+<V,W>.
\end{align*}

The Seiberg-Witten equations fit into a variational set up by defining the functional $\sw:\ca\rightarrow \R$,
 
 {\small{
 
 \begin{equation}\label{E:04}
 \begin{aligned}
\swa(A,\phi) = \int_{X}\{\frac{1}{4}\mid F_{A}\mid^{2} + \mid \nabla^{A} \phi
\mid^{2} + \frac{1}{8}(\mid\phi\mid^{2}+k_{g})^{2}-\frac{k^{2}_{g}}{8}  \}dv_{g} +2\pi^{2}N_{\cc},
\end{aligned}
\end{equation}
}}
\vspace{05pt}

\noindent   where $k_{g}$ is the scalar curvature of $(X,g)$ and 

$$N_{\cc}=c^{2}_{1}(\cc)=c_{1}(\cc)\wedge c_{1}(\cc)=\frac{1}{4\pi^{2}}\int_{X}[\mid F^{+}_{A}\mid^{2}-\mid F^{-}_{A}\mid^{2}]dv_{g}.$$ 

\noindent The functional $\swa$ is  gauge invariant,  therefore it is well defined on $\B$.  Defining $\bar{k}_{g}=\max\{0,\sup_{x\in M}(-k_{g}(x))\}$, it is straightforward from eq. ~(\ref{E:04}) that a necessary condition to the existence of an irreducible  monopole is $\mid\mid \phi\mid\mid_{\infty}<\sqrt{\bar{k}_{g}}$.  Jost-Peng-Wang proved  in ~\cite{JPW96}  the functional $\swa:\B\rightarrow\R$ satisfies the Palais-Smale condition, so the critical sets are compact and the minimum is always achieved ($\swa\ge 0$).
  Let $grad(\swa)(A,\phi)$ be the gradient at $(A,\phi)$, the Euler-Lagrange equations defined by $grad(\swa)(A,\phi)=0$ are ($Im(z)=\frac{z-\bar{z}}{2i}$)

\begin{equation}\label{E:123}
d^{*}F_{A} \ +\  4iIm(<\nabla^{A}\phi,\phi>=0,\quad \Delta_{A} \phi \ +\  \frac{\mid \phi \mid^{2}\ +\ k_{g}}{4}\phi=0.
\end{equation}

\vspace{05pt}

 \noindent These are the  $\G$-invariant $2^{nd}$-order $SW$-equations because  
 
 $$grad(\swa)(g.(A,\phi))=g^{-1}.grad(\swa)(A,\phi).$$
 
 \noindent  They may admit irreducible and reducible solutions. As expected, the $SW$-monopoles    also satisfy   eqs ~(\ref{E:123}),  as asserted by  the identities
 
 \begin{equation*}
\begin{aligned}
&d^{\ast}(F_{A}) =2d^{\ast}F^{+}_{A}-=d^{\ast}[\sigma(\phi)]=-4i Im\left(<D^{+}_{A}\phi,X.\phi>+<\nabla^{A}_{X}\phi,\phi>\right)\\
&D^{+}_{A}\phi =0\ \Rightarrow\ 0=D^{-}_{A}D^{+}_{A}\phi=\triangle_{A}+\frac{k_{g}}{4}\phi+\frac{F^{+}_{A}}{2}.\phi=\triangle_{A}\phi+\frac{k_{g}}{4}\phi+\frac{\mid\phi\mid^{2}}{2}\phi
\end{aligned}
\end{equation*}

\noindent  Due to  the identity $\swa(A,\phi)=\int_{M}(\mid F^{+}_{A}-\sigma(\phi)\mid^{2}+\mid D^{+}_{A}\phi\mid^{2})dv_{g}$,  whenever $\cc\in Spin^{c}(X)$ is a basic class the  $\sw$-monopoles
  are stable critical points.    The solution set of eqs. ~(\ref{E:123}) may be singular in the presence of reducible points. 
 Assuming $k_{g}\ge 0$, the minimum is achieved at reducible points because $\swa(A,\phi)\ge \swa(A,0)$, $\forall (A,\phi)\in\ca$. At a critical point $(A,0)$,  the $\swa$-monopole eqs ~(\ref{E:01}) is $F^{+}_{A}=0$ and the eqs. ~(\ref{E:123}) reduces to $d^{\ast}F_{A}=0$. 
Under the assumption  $b^{+}_{2}(M)\ge 2$, independently of the sign of $k_{g}$,   the anti-self-dual solutions can be ruled out. 
If $d^{\ast}F_{A}=0$, then  $F_{A}$ is  harmonic  2-form. Let $B\in\Aa$ be such that  $F_{A}=F_{B}$, so $\omega=B-A\in H^{1}(M,\R)$. Moreover, $B$ is gauge equivalent to $A$ 
 if and only if $\omega=g^{-1}dg\in H^{1}(M,\Z)$. Therefore,  the space
 $\jx=\{(A,0)\in\ca\mid d^{\ast}F_{A}=0\}\diagup\G$   is diffeomorphic to the jacobian torus $T^{b_{1}(X)}=\frac{H^{1}(M,\R)}{H^{1}(M,\Z)}$. 
A local slice of $\B$ at $(A,\phi)$ is given  ~(\cite{JM96})  by the kernel $ker(T^{\ast}_{\phi})=ker(d^{\ast})\oplus \phi^{\perp}$ of the operator 

\begin{equation}
\begin{aligned}
T^{*}_{\phi}:\Omega&^{1}(X,i\R)\oplus\vsa\rightarrow \Omega^{0}(X,i\R),\\
&T^{*}_{\phi}(\theta,V)=d^{*}\theta\ -<V,\phi>,.   
\end{aligned}
\end{equation}

\vspace{05pt}

\noindent  Because
$(d^{\ast})^{2}=0$, it decomposed
 into  subspaces $ker(d^{\ast})= d^{\ast}(\Omega^{2}(M,i\R))\oplus\mathcal{H}_{1}$, where the subspace of harmonic 1-forms $\mathcal{H}_{1}=\{\theta\in\Omega^{1}(M,i\R)\mid d\theta=d^{\ast}\theta=0\}$ is the tangent space to the Jacobian torus $\jx$ at $(A,0)$. 
 The instability of $\jx$ is established by performing the analysis of the $2^{nd}$ variation $\frac{\delta^{2}\sw}{\delta\alpha\delta\beta}$ of the $\sw$-functional. 
The tangent space of $\ca$ at $(A,\phi)$ is $T_{(A,\phi)}\ca=\Omega^{1}(X;i\R)\oplus\vsa$, so
$\frac{\delta^{2}\sw}{\delta\alpha\delta\beta}$  defines a symmetrical bilinear form $\h((\theta_{1},V_{1}),(\theta_{2},V_{2}))=<(\theta_{1},V_{1}),H(\theta_{2},V_{2})>$, 
where the operator $H=\left(
\begin{matrix}
h_{11} & h_{12}\\
h_{21} & h_{22}
\end{matrix}\right)$ has entries given by 
 
 \begin{equation*}
\begin{aligned}
\frac{\delta^{2}\swa}{\delta\Lambda\delta\theta}\mid_{(A,\phi)}.(\theta,\Lambda)&=<\theta,(d^{\ast}d\Lambda+4<\Lambda(\phi),\phi>)=<\theta,h_{11}(\Lambda)>,\\
\frac{\delta^{2}\swa}{\delta W\delta\theta}\mid_{(A,\phi)}.(\theta,W)&=2\left(<\nabla^{A}\phi,\theta(W)> + <\nabla^{A}W,\theta(\phi)>\right)=\\
&=<\theta,h_{12}(W)>,\ (h_{21}=h_{12})\\
\frac{\delta^{2}\swa}{\delta W\delta V}\mid_{(A,\phi)}.(V,W)&= <V,\triangle_{A}W + \frac{k_{g} + \mid\phi\mid^{2}}{4}W+\frac{1}{4}<\phi,W>\phi>=\\
&=<V,h_{22}(W)>.
\end{aligned}
\end{equation*}
\vspace{05pt}

\noindent The  restriction of the $2^{nd}$-variation   to the slice of $\B$ at $(A,\phi)$  is an elliptic operator   
$H:ker(T^{\ast}_{\phi})\rightarrow ker(T^{\ast}_{\phi})$ whose leading terms  $d^{\ast}d=\triangle$  and  $\triangle_{A}=-(\nabla^{A})^{\ast}\nabla^{A}$ are laplacians and whose  tail is a compact operator. Thus, $H$ is a self-adjoint Fredholm operator.  The spectrum $\sigma(H)$
  is a discrete set such that each eigenvalue has finite multiplicity and no accumulation points, besides,  there are but a finite number of eigenvalues below any given number.
 At  $(A,0)$, the hessian 
 operator becomes
  $ H = \left(
 \begin{matrix}
 d^{*}d  & 0  \\
 0 & L_{A}
 \end{matrix}
 \right)$, 
 where $L_{A}:\vsa\rightarrow\vsa$ is the elliptic self-adjoint operator  
 
 \begin{equation}\label{E:456}
L_{A}(V)=\triangle_{A}V+\frac{k_{g}}{4}V.
\end{equation}

 \noindent   For each $\lambda\in\sigma(L_{A})$, the  corresponding eingenspace $\mathcal{V}_{\lambda}\subset T_{(A,0)}\B$ 
   has finite dimension. In this way, $ker(\mathcal{H})=T_{(A,0)}\mathcal{J}_{X}\oplus \mathcal{V}_{0}$.
   The  lower eigenvalue of $L_{A}$  given by Rayleigh's quotient

\begin{equation}
\begin{aligned}
\lambda^{\cc}_{g}(A)&=\inf_{V\in\csa^{+}} \frac{\int_{M}\{\mid \nabla^{A}V\mid^{2}\ + \ \frac{k_{g}}{4}\mid V\mid^{2}\}dv_{g}}{\int_{M}\mid V\mid^{2}dv_{g}}
\end{aligned}
\end{equation}

\noindent is bounded below, so is the spectrum $\sigma(L_{A})$.


\subsection{Parallel Spinor}

A spinor $\psi\in\vsa$ is parallel with respect to a connection $\nabla$ if  $\nabla \psi=0$.  In general, it is  difficult to pull off information about  $\sigma(L_{A})$, 
but using Kato's inequality  it can be compare  with the spectrum of $L=\triangle_{g}+\frac{k_{g}}{4}$  defined on functions $f:M\rightarrow\R$ ($\triangle_{g}$=Laplace-Beltrami).
Consider on $M$ a smooth atlas  $\mathcal{A}(M)=\{(U_{\lambda},\xi_{\lambda})\mid\lambda\in \lambda\}$ such that,  for each $\lambda\in\Lambda$,   (i) $U_{\lambda}$ is convex, (ii)
 the local coordinates are $\{(x_{1},x_{2},x_{3},x_{4})\in U_{\lambda}\mid x_{i}\in\R\}$ and (iii) attached to $U_{\lambda}$ there exists a local orthonormal frame 
$\beta_{\lambda}=\{e_{i}\mid e_{i}=\partial_{i},\ 1\le i\le 4\}$.

\begin{proposition}\label{P:234}
(Kato's ineq.) Let $A\in\Aa$ and $V\in\vsa$. Then,

\begin{equation}\label{E:90}
\mid\nabla\mid V\mid^{2}\mid\ \le\  \mid \nabla^{A}V\mid^{2}
\end{equation}

\noindent The equality holds if, and only if, there exists a 1-form $\omega\in\Omega^{1}(M)$ such that  $\nabla^{A}V=\omega V$.
\end{proposition}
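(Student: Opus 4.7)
The plan is to reduce Kato's inequality to a pointwise Cauchy--Schwarz estimate on the derivative of $|V|^{2}$. The key algebraic input is that $\nabla^{A}$ preserves the Hermitian metric on $\pcsa$: this follows from the construction \eqref{E:471}, since the spin connection on $\svb$ is $Cl_{4}$-compatible and the $U_{1}$-connection on $\la^{1/2}$ is unitary, so their tensor product is metric.

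First, on the open set $\Omega := \{x\in M : V(x)\neq 0\}$ the function $|V|$ is smooth, and for every $X\in T_{x}M$,
\begin{equation*}
X\cdot |V|^{2} = 2\operatorname{Re}\langle \nabla^{A}_{X}V, V\rangle, \qquad \text{so} \qquad |V|\,X\cdot|V| = \operatorname{Re}\langle \nabla^{A}_{X}V,V\rangle.
\end{equation*}
Cauchy--Schwarz gives $|\operatorname{Re}\langle \nabla^{A}_{X}V,V\rangle|\le |\nabla^{A}_{X}V|\cdot|V|$; dividing by $|V|$ and varying $X$ over unit vectors yields $|d|V|(x)|\le |\nabla^{A}V(x)|$ on $\Omega$. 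To cross the zero set I would regularize by $u_{\epsilon}=\sqrt{|V|^{2}+\epsilon^{2}}$: the same computation applied to $u_{\epsilon}$ gives $|du_{\epsilon}|\le |\nabla^{A}V|$ globally with no singularity, and letting $\epsilon\to 0$ (together with $d|V|=0$ almost everywhere on $M\setminus\Omega$, since $|V|$ is Lipschitz and minimized there) yields the inequality in the weak sense everywhere.

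For the equality statement I would fix a point of $\Omega$ and decompose $\nabla^{A}_{X}V = \alpha(X)V + \beta(X)$ with $\beta(X)\perp V$. Then $\langle \nabla^{A}_{X}V,V\rangle = \alpha(X)|V|^{2}$ while $|\nabla^{A}_{X}V|^{2} = |\alpha(X)|^{2}|V|^{2} + |\beta(X)|^{2}$, so equality in Cauchy--Schwarz for every $X$ forces simultaneously $\operatorname{Im}\alpha(X)=0$ and $\beta(X)=0$. Setting $\omega(X):=\alpha(X)$ produces a real $1$-form $\omega\in\Omega^{1}(M)$ with $\nabla^{A}V=\omega\otimes V$ on $\Omega$, and the converse direction $\nabla^{A}V=\omega V \Rightarrow$ equality in \eqref{E:90} is a direct substitution.

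The main obstacle is the equality direction: one must track carefully that taking real parts in the Cauchy--Schwarz step forces $\operatorname{Im}\alpha=0$, which is precisely what guarantees $\omega$ is an honest real-valued $1$-form on $M$ rather than a complex one along $V$. Handling $\omega$ across the zero locus of $V$ (where the relation is vacuous in the interior and extends by continuity of $\nabla^{A}V$ at boundary points) is secondary and absorbed into the same $\epsilon$-regularization used for the inequality.
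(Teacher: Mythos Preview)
Your argument is correct and follows the same pointwise Cauchy--Schwarz route as the paper, via the compatibility identity $|V|\,\nabla_i|V| = \operatorname{Re}\langle \nabla^A_i V, V\rangle$ in a local orthonormal frame. You are in fact more careful than the paper in two places: the $\epsilon$-regularization across the zero set of $V$ (which the paper omits, tacitly assuming $V\neq 0$) and the observation that equality forces $\omega$ to be real-valued (the paper's proof writes $\alpha_i:M\to\mathbb{C}$ without tracking the real-part step, even though its statement has $\omega\in\Omega^{1}(M)$).
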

\begin{proof}
Taking the orthonormal frame $\beta=\{e_{i}\mid 1\le i\le 4\}$, locally we get $\mid\nabla\mid V\mid\mid^{2}=\sum_{i}\mid\nabla_{i}\mid V\mid\mid^{2}$ and 
$\mid\nabla^{A} V\mid^{2}=\sum_{i}\mid\nabla^{A}_{i}V \mid^{2}$.  From the identities $\nabla_{i}\mid V\mid^{2}=2\mid V\mid.\nabla_{i}\mid V\mid$ and 
$\nabla_{i}\mid V\mid^{2}=\nabla_{i}<V,V>=2<\nabla^{A}V,V>$, we have $\mid V\mid.\nabla_{i}\mid V\mid=<\nabla^{A}_{i}V,V>$.
Assuming $V\ne 0$ and applying Cauchy-Scwartz inequality it follows the inequality $\mid\nabla_{i}\mid V\mid\mid \ \le \ \mid\nabla^{A}_{i}V\mid$.
 Hence, ineq. ~(\ref{E:90}) is verified. The equality is attained whenever there exists functions $\alpha_{i}:M\rightarrow\C$ such that $\nabla^{A}_{i}V=\alpha_{i}V$, that is,

\begin{equation*}
\nabla^{A} V=\sum_{i}\nabla^{A}_{i}Vdx^{i}=\left[\sum_{i}\alpha_{i} dx^{i}\right] V=\omega V
\end{equation*}

\end{proof}

\noindent If  $V$ is a harmonic spinor ($D_{A}V=0$) and $\nabla^{A}V=\omega V$, then $\nabla^{A}V=0$. It is rather restrictive to assume  $V$ as a harmonic spinor, but under  an extra assumption on the functions $\alpha_{i}:M\rightarrow\C$ the existence of a parallel spinor can be achieved.
\noindent 
The reverse claim is also true; 

\begin{proposition}\label{P:02}
There exists a parallel spinor $V\in\vsa$ if, and only if, there exists a spinor $V_{0}\in\vsa$ and a class $\omega\in H^{1}_{dR}(M)$ such that  $\nabla^{A} V_{0}=\omega V_{0}$.
\end{proposition}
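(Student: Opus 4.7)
The plan is to prove both implications by direct construction, with the bulk of the work in the reverse implication.

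The forward direction is immediate: if $V\in\vsa$ is parallel, then $\nabla^{A}V=0=0\cdot V$, so we may take $V_{0}:=V$ and let $\omega:=0$ represent the zero class in $H^{1}_{dR}(M)$.

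For the reverse direction, given $V_{0}\in\vsa$ and a closed $1$-form $\omega$ representing a class in $H^{1}_{dR}(M)$ with $\nabla^{A}V_{0}=\omega V_{0}$, I would construct a parallel spinor $V$ by multiplying $V_{0}$ by a scalar factor that cancels $\omega$. By the Hodge decomposition theorem, write $\omega=\omega_{h}+d\phi$ where $\omega_{h}$ is the harmonic representative of $[\omega]$ and $\phi:M\to\C$. Set $V:=e^{-\phi}V_{0}$. Applying the Leibniz rule for $\nabla^{A}$ to the product $e^{-\phi}\cdot V_{0}$ gives
\[
\nabla^{A}V \;=\; -e^{-\phi}(d\phi)\otimes V_{0} \;+\; e^{-\phi}\,\nabla^{A}V_{0} \;=\; e^{-\phi}(\omega-d\phi)\,V_{0} \;=\; \omega_{h}\,V,
\]
reducing the problem to the harmonic representative $\omega_{h}$.

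It then remains to argue $\omega_{h}=0$. Using that $\nabla^{A}$ is unitary with respect to the Hermitian metric on $\csa^{+}$,
\[
d|V|^{2} \;=\; 2\,\mathfrak{Re}\langle\nabla^{A}V,V\rangle \;=\; 2\,\mathfrak{Re}(\omega_{h})\,|V|^{2},
\]
so $\mathfrak{Re}(\omega_{h})=\tfrac{1}{2}d\log|V|^{2}$ is exact; a harmonic $1$-form that is exact must vanish, giving $\mathfrak{Re}(\omega_{h})=0$. A parallel argument for the imaginary part, absorbing $i\,\mathfrak{Im}(\omega_{h})$ by a unitary rescaling $V\mapsto e^{-ig}V$ whose existence is guaranteed by the triviality of the class $[\omega]\in H^{1}_{dR}(M)$, yields $\mathfrak{Im}(\omega_{h})=0$. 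Hence $\nabla^{A}V=0$.

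The hard part is the final step showing $\omega_{h}=0$. The real-part cancellation is clean from unitarity of $\nabla^{A}$, but the imaginary-part argument is delicate: one must verify that a global single-valued unitary rescaling exists, which relies on the fact that $V_{0}$ is a globally defined section of the spinor bundle together with the cohomological triviality assumption on $[\omega]$. Once both components vanish, the rescaled spinor $V$ produced from $V_{0}$ is parallel, completing the equivalence.
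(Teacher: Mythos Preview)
Your forward direction is correct and in fact cleaner than the paper's: taking $V_{0}=V$ and $\omega=0$ suffices, whereas the paper writes $V=fV_{0}$ for an arbitrary nonvanishing $f$ and observes $\omega=-df/f$ is exact.

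Your reverse direction takes a genuinely different route from the paper. The paper proceeds by directly solving the scalar equation $df=f\omega$: closedness of $\omega$ gives the local integrability condition $\partial_{j}\alpha_{i}=\partial_{i}\alpha_{j}$, from which an explicit local primitive $f=\exp\!\bigl(\int_{0}^{x_{1}}\alpha_{1}\,dt\bigr)$ is written down and then declared global. Your approach via Hodge decomposition and the unitarity identity $d|V|^{2}=2\,\mathfrak{Re}(\omega_{h})\,|V|^{2}$ is more conceptual and has the virtue of cleanly forcing $\mathfrak{Re}(\omega_{h})=0$ (harmonic and exact implies zero).

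However, there is a genuine gap in your treatment of $\mathfrak{Im}(\omega_{h})$. You invoke ``the triviality of the class $[\omega]\in H^{1}_{dR}(M)$,'' but the hypothesis does not say $[\omega]=0$; it says $\omega$ \emph{represents} a class in $H^{1}_{dR}(M)$, i.e.\ $\omega$ is closed. Your unitary rescaling $V\mapsto e^{-ig}V$ requires a global $g$ with $dg=\mathfrak{Im}(\omega_{h})$, which exists precisely when $\mathfrak{Im}(\omega_{h})$ is exact; since $\mathfrak{Im}(\omega_{h})$ is harmonic, that is equivalent to $\mathfrak{Im}(\omega_{h})=0$ --- the very thing you are trying to prove. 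So the argument is circular at this step. The unitarity trick that handled the real part has no analogue for the imaginary part, because a purely imaginary $\omega_{h}$ leaves $|V|$ constant and contributes nothing to $d|V|^{2}$. Without an independent reason to kill $\mathfrak{Im}(\omega_{h})$, your construction produces a spinor satisfying $\nabla^{A}V=i\,\mathfrak{Im}(\omega_{h})\,V$, which is parallel for the \emph{shifted} connection $\nabla^{A}-i\,\mathfrak{Im}(\omega_{h})$ but not, in general, for $\nabla^{A}$ itself.
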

 \begin{proof}
 Suppose $V\in\vsa$ is parallel, $\nabla^{A}V=0$. So, $V$ has constant length. Let $V=fV_{0}$, where $f:M\rightarrow\C$, so $f(x)\ne 0$ and $V_{0}(x)\ne 0$, $\forall x\in M$. Furthermore, 

\begin{equation}
\nabla^{A}V=df\wedge V_{0}+f\wedge\nabla^{A}V_{0}=0\ \Rightarrow\ \nabla^{A}V_{0}=-\frac{df}{f}V_{0}.
\end{equation}

\noindent The 1-form $\omega=-\frac{df}{f}=-d(ln(f))$ is exact. Now, let's prove the reverse assuming that $\nabla V_{0}=\omega V_{0}$ and $d\omega=0$. The equation $\nabla^{A}(fV_{0})=0$ is equivalent to 
 $df-f\omega=0$; in this case  $\omega=-d(ln(f))$.
 Taking a local chart $(U_{\lambda},\phi_{\lambda})$ from the atlas defined at the beginning of this section, say   a chart $(U_{\lambda},\phi_{\lambda})$ with frame 
$\beta_{\lambda}=\{e_{i}\mid 1\le i\le 4\}$, and  defining $\alpha_{i}=w(e_{i})$,  we get  $\omega=\sum_{i}\alpha_{i}dx^{i}$. In this way,  the equation
$df-f\omega=0$ becomes locally  described by the system $\partial_{i}f-\alpha_{i}f=0$, $1\le i\le 4$.
 The  closedness of $\omega$ is equivalent to the conditions $\partial_{j}\alpha_{i}=\partial_{i}\alpha_{j}$, for all $i,j$. Thus, the necessary condition  
 $\partial_{j}\partial_{i}f=\partial_{i}\partial_{j}f$ to the existence of $f$ is easily verified, since

$$\partial_{j}\partial_{i}f=-(\partial_{j}\alpha_{i})f-\alpha_{i}\alpha_{j}f=\partial_{i}\partial_{j}f.$$

\noindent  The identity $\partial_{j}\alpha_{i}=\partial_{i}\alpha_{j}$ allow us to integrate and write

\begin{equation*}
\alpha_{i}(x_{1},x_{2},x_{3},x_{4})=\int_{0}^{x_{1}}\partial_{i}\alpha_{1}(t,x_{2},x_{3},x_{4})dt,\ 2\le i\le 4.
\end{equation*}

\noindent Therefore, the function 

\begin{equation*}
f(x_{1},x_{2},x_{3},x_{4})=e^{\int^{x_{1}}_{0}\alpha_{1}(t,x_{2},x_{3},x_{4})dt},
\end{equation*}

\noindent satisfies $\partial_{i}f-\alpha_{i}f=0$, for  $1\le i\le 4$, and is $C^{\infty}$.   The function $f$ is globally defined because it depends only on the 1-form $\omega$.
  \end{proof}

\noindent    As before,  consider   $\beta=\{e_{\alpha}; 1\le \alpha\le 4\}$ an orthornormal frame 
on $M$ and $\gamma_{\alpha}=\cl(e_{\alpha})$. The Ricci operator induces the operator $\cl(Ric(.)):TX\rightarrow Cl(X)$,
$\cl(Ric(X))=\sum_{\alpha}R^{\s}(e_{\alpha},X)\gamma_{\alpha}$,  such that 

\begin{equation*}
\begin{aligned}
\left[\cl(Ric(X))\right]^{2}=-\sum_{\alpha}\mid R^{\s}(e_{\alpha},X)\mid^{2}=-\mid Ric(X)\mid^{2}.
\end{aligned}
\end{equation*}

\vspace{05pt}

\begin{definition}
Let $A\in\Aa$ be a connection 1-form with curvature $if_{A}\in\Omega^{2}(M,i\R)$;
\begin{enumerate}
\item Let $H_{A}:TX\times \Omega^{0}(\csa)\rightarrow\Omega^{0}(\csa)$ be the linear operator  defined  by

\begin{equation*}
\begin{aligned}
&H_{A}(X,\psi)=-\frac{1}{2}\sum_{\alpha}\gamma_{\alpha}.F^{\cc}_{A}(e_{\alpha},X)(\psi),
\end{aligned}
\end{equation*}

\item Let $I_{A}:\Omega^{0}(TM)\rightarrow\Omega^{0}(TM)$ be the skew-symmetric operator

\begin{equation}
\begin{aligned}
I_{A}(X)&=\sum_{\alpha}[(X\ \llcorner\ f_{A})(e_{\alpha})]e_{\alpha}\\
\end{aligned}
\end{equation}

\end{enumerate}

\end{definition}

\noindent Using the local frame $\beta=\{e_{i}\}$ we have $(I_{A})_{\alpha\beta}=i(e_{\beta}\llcorner f_{A})(e_{\alpha})=2if_{\beta\alpha}$.  
Let $A\in\Aa$ and assume $\psi\in\Omega^{0}(\csa)$ is a parallel spinor.
Global information about $M$ can be draw, as we shall see next, upon the existence of a parallel spinor $\psi$. In this case, it follows  that  $H_{A}(X,\psi)=0$, for all $X\in TM$. The first consequence is the identity $\mid\mid Ric\mid\mid^{2}=\mid\mid f_{A}\mid\mid^{2}$ whose  proof  in ~(\cite{Fr}, chap 3) relies strongly on the  identity 

\begin{equation}
[\cl(Ric(X))-i\cl(I_{A}(X))].\psi=0 ,\ \forall X\in TM.
\end{equation}

\noindent  So, $f_{A}=0$ if and only if $Ric=0$. Moreover,  
(i) $\mid \cl(Ric(X))\mid=\mid \cl(I_{A}(X))\mid$ and (ii) $<\cl(Ric(X)),i\cl(I_{A}(X))>=0$. The identity $\cl(Ric(X))(\psi)=i\cl(I_{A})(X))(\psi)$ is a key point. Defining
 $\mathrm{R}=\{Ric(X)\mid X\in\Omega^{0}(TX)\}$, 
 from the self-adjointness of Ricci operator  there exists the decomposition $TX=\mathrm{R}\oplus ker(I_{A})$ ($ker(I_{A})=\mathrm{R}^{\perp}$).  Let's consider the operators 

\begin{equation*}
\begin{aligned}
\Psi:TX\rightarrow \Omega^{0}(\svb)\\
X\to\cl(X).\psi
\end{aligned}
\quad
\begin{aligned}
\Psi^{\iota}:TX\rightarrow \Omega^{0}(\svb)\\
X\to i\cl(X).\psi
\end{aligned}
\end{equation*}

\noindent and the vector space  $\mathrm{E}=\Psi^{-1}(Imag(\Psi)\cap Imag(\Psi^{\iota}))$. The existence of a parallel spinor $\psi$ means that $\mathrm{R}\subset \mathrm{E}$ and so 
$\mathrm{E}^{\perp}\subset ker(I_{A})$.  These spaces define the distributions $\mathcal{E}=\{\mathrm{E}_{x}\mid x\in M\}$  and $\mathcal{E}^{\perp}=\{\mathrm{E}^{\perp}_{x}\mid x\in M\} $ 

\begin{proposition}
If $\psi$ is a parallel spinor, then the distributions $\mathcal{E}$  and $\mathcal{E}^{\perp} $ are integrable. 
\end{proposition}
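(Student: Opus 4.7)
The plan is to prove the stronger statement that both $\mathcal{E}$ and $\mathcal{E}^{\perp}$ are \emph{parallel} with respect to the Levi-Civita connection; integrability then follows at once from the torsion-free identity $[X,Y]=\nabla_{X}Y-\nabla_{Y}X$.

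First, a setup step: since $\nabla^{A}\psi=0$ we have $d|\psi|^{2}=2\mathfrak{Re}\langle\nabla^{A}\psi,\psi\rangle=0$, so $\psi$ has constant norm and, being nonzero somewhere, is nowhere zero. Combined with $\cl(X)^{2}=-|X|^{2}\,\mathrm{id}$, this forces the pointwise map $\Psi_{x}\colon T_{x}M\to\svb_{x}$, $X\mapsto\cl(X)\psi(x)$, to be injective. Consequently $\mathrm{E}_{x}$ is exactly the set of $X\in T_{x}M$ admitting a \emph{unique} $JX\in T_{x}M$ with $\cl(X)\psi=i\,\cl(JX)\psi$, and $J^{2}=-\mathrm{id}$ on $\mathcal{E}$ by applying the relation twice. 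Assuming $\mathcal{E}$ has locally constant rank, the injectivity of $\Psi$ makes $J$ a smooth tensor on $\mathcal{E}$.

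The key identity comes from the parallelism of $\psi$: for any smooth vector field $Y$ and any $X\in TM$,
$$\nabla^{A}_{X}\bigl(\cl(Y)\psi\bigr)=\cl(\nabla_{X}Y)\psi+\cl(Y)\nabla^{A}_{X}\psi=\cl(\nabla_{X}Y)\psi.$$
Let $Y\in\Gamma(\mathcal{E})$, so that $\cl(Y)\psi=i\,\cl(JY)\psi$ with $JY$ smooth. Differentiating both sides along $X$ and using $\C$-linearity of $\nabla^{A}$,
$$\cl(\nabla_{X}Y)\psi=\nabla^{A}_{X}\bigl(i\,\cl(JY)\psi\bigr)=i\,\cl\!\bigl(\nabla_{X}(JY)\bigr)\psi\in\operatorname{Imag}(\Psi)\cap\operatorname{Imag}(\Psi^{\iota}),$$
which shows $\nabla_{X}Y\in\mathcal{E}$; hence $\mathcal{E}$ is parallel. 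Since the Levi-Civita connection preserves the metric, $\mathcal{E}^{\perp}$ is parallel as well: for $X\in\Gamma(\mathcal{E}^{\perp})$ and $Z\in\Gamma(\mathcal{E})$,
$$g(\nabla_{Y}X,Z)=Y\cdot g(X,Z)-g(X,\nabla_{Y}Z)=0,$$
because $g(X,Z)\equiv 0$ and $\nabla_{Y}Z\in\Gamma(\mathcal{E})$. Applying the torsion-free formula $[X_{1},X_{2}]=\nabla_{X_{1}}X_{2}-\nabla_{X_{2}}X_{1}$ to pairs of sections of the same distribution finishes the proof of integrability for both $\mathcal{E}$ and $\mathcal{E}^{\perp}$.

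The expected obstacle is not the algebraic argument but the smoothness/constant-rank assumption on $\mathcal{E}$: the dimension of $\operatorname{Imag}(\Psi_{x})\cap i\,\operatorname{Imag}(\Psi_{x})$ can in principle be only upper semicontinuous, with jumps. One resolution is to establish the parallelism of Step~3 on the open dense set where the rank is maximal, and then invoke that Levi-Civita parallel transport is an isometry preserving $\mathcal{E}$ to conclude that the rank is in fact locally constant along curves, hence globally constant on the connected manifold $M$.
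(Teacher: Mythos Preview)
Your argument is correct. For $\mathcal{E}$ it coincides with the paper's proof: both differentiate the defining relation $\cl(Y)\psi=i\,\cl(JY)\psi$ using $\nabla^{A}\psi=0$ to obtain $\nabla$-invariance of $\mathcal{E}$, and then invoke torsion-freeness. Your treatment is more careful in that you make the injectivity of $\Psi$ and the constant-rank issue explicit, whereas the paper leaves these implicit.

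For $\mathcal{E}^{\perp}$ the two proofs diverge. The paper argues via the curvature: it uses that $\mathcal{E}^{\perp}\subset\ker(I_{A})$, i.e.\ $X\llcorner f_{A}=0$ for $X\in\mathcal{E}^{\perp}$, and then appeals to $df_{A}=0$ to conclude integrability (the kernel of a closed $2$-form being involutive). Your route is instead purely metric: once $\mathcal{E}$ is parallel, its orthogonal complement is automatically parallel by compatibility of $\nabla$ with $g$, and integrability follows. Your argument is shorter and self-contained; it also yields the stronger conclusion that both distributions are parallel (which is exactly what the subsequent corollary on the de~Rham splitting needs). The paper's argument, on the other hand, ties $\mathcal{E}^{\perp}$ directly to the geometric data $f_{A}$, but as written it only shows involutivity of $\ker(I_{A})$ rather than of the possibly smaller subspace $\mathcal{E}^{\perp}$, so your approach is in fact the cleaner of the two.
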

 \begin{proof}
  For all $X\in\mathcal{E}$ there exist an unique $Y\in TX$ such that $X.\psi=iY.\psi$. The space $E$ is closed under the 
  action of the covariant derivative because  $X.\psi=iY.\psi$ implies $(\nabla X).\psi=i(\nabla Y).\psi$.   
  
\noindent (i) $\mathcal{E}^{\perp}$ is integrable.\\
Note that for all $X\in\mathcal{E}^{\perp}$ we get $X\llcorner f_{A}=0$, in particular $f_{A}$ annihilates $X$. Since $df_{A}=0$, the distribution $\mathcal{E}^{\perp}$ is  integrable. 

\noindent (ii) $\mathcal{E}$ is integrable.\\
 Taking $X,Y\in \mathrm{E}$,  it follows from the $\nabla$-invariance of $\mathrm{E}$ that the commutator $[X,Y]=\nabla_{X}Y-\nabla_{Y}X\in\mathrm{E}$. 
  
 \end{proof}

\begin{corollary}
There exist submanifolds $M_{1},M_{2}\subset M$ such that $M_{1}$ is K\"{a}hler and $M_{2}$ is spin.
\end{corollary}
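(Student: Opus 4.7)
My plan is to apply Frobenius' theorem to the integrable distributions $\mathcal{E}$ and $\mathcal{E}^{\perp}$ from the preceding proposition. This foliates $M$ by integral leaves of each; taking $M_{1}$ to be a leaf of $\mathcal{E}$ and $M_{2}$ a leaf of $\mathcal{E}^{\perp}$, the task reduces to verifying that the induced geometric structures are Kähler and spin respectively.

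For $M_{1}$, by the very construction of $\mathrm{E}$ each $X\in\mathcal{E}$ comes with a unique $Y\in\mathcal{E}$ such that $\cl(X).\psi=i\cl(Y).\psi$; set $JX:=Y$. First I would verify $J^{2}=-\mathrm{id}$ by iterating the defining relation, using $\cl(X)^{2}=-|X|^{2}$ on the nonzero parallel spinor $\psi$, and appealing to injectivity of $\cl(\cdot)\psi$. The metric compatibility $|JX|=|X|$ (and hence $g(JX,JY)=g(X,Y)$ by polarization) falls out from the hermitian compatibility of Clifford multiplication with the spinor inner product. The Kähler condition $\nabla J=0$ is the real work: differentiating $\cl(X).\psi=i\cl(JX).\psi$ along any $Z$, using $\nabla\psi=0$ and that $\cl$ is parallel with respect to the metric connection, yields $\cl(\nabla_{Z}X).\psi=i\cl(\nabla_{Z}(JX)).\psi$; comparison with the defining identity applied to $\nabla_{Z}X\in\mathcal{E}$ and injectivity of $\cl(\cdot)\psi$ give $\nabla_{Z}(JX)=J(\nabla_{Z}X)$. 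A parallel almost complex structure compatible with $g$ is Kähler.

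For $M_{2}$, the inclusion $\mathcal{E}^{\perp}\subset\ker(I_{A})$ forces $X\llcorner f_{A}=0$ for every $X$ tangent to $M_{2}$, so $f_{A}$ pulls back to zero on $M_{2}$ and the $U_{1}$-curvature of $\la$ vanishes on $M_{2}$. Combined with the identity $\cl(\mathrm{Ric}(X))\psi=i\cl(I_{A}(X))\psi$ noted in the text, the Ricci operator also annihilates $\mathcal{E}^{\perp}$, so $M_{2}$ is Ricci-flat in tangential directions. Vanishing of the $U_{1}$-curvature on $M_{2}$ means $c_{1}(\la)|_{M_{2}}=0$ in de Rham cohomology, so the locally defined line bundle $\la^{1/2}$ has trivial first Chern class on $M_{2}$ and the locally defined spin bundle $\svb$ glues into a genuine global spin bundle there (perhaps after passage to a simply connected leaf so the flat line bundle truly trivializes), exhibiting $M_{2}$ as spin.

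The main obstacle will be the identity $\nabla J=0$: it rests on the $\nabla$-invariance of $\mathcal{E}$ that was already exploited in the preceding proof, and one must navigate carefully the interplay between the scalar factor $i$ and the parallel transport. A secondary subtlety is promoting the flatness of $\la^{1/2}|_{M_{2}}$ to the existence of an honest global spin structure, which at worst demands restricting to a simply connected leaf; since the corollary asserts only the existence of \emph{some} such submanifold, this is harmless.
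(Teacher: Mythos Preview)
Your proposal is correct and follows essentially the same route as the paper: take $M_{1}$ and $M_{2}$ as integral leaves of $\mathcal{E}$ and $\mathcal{E}^{\perp}$, define $J$ on $M_{1}$ via the relation $\cl(X)\psi=i\cl(JX)\psi$, and verify $J^{2}=-I$, metric compatibility, and $\nabla J=0$ by differentiating this relation and using $\nabla\psi=0$; for $M_{2}$, deduce $f_{A}|_{M_{2}}=0$ from $\mathcal{E}^{\perp}\subset\ker(I_{A})$ and conclude both Ricci-flatness and the vanishing of the $\spinc$ class. The paper carries out the metric-compatibility step via an explicit Clifford identity rather than your appeal to hermitian compatibility, and it asserts $w_{2}(M_{2})=0$ directly from $f_{A}|_{M_{2}}=0$ without your caveat about passing to a simply connected leaf; your added care there is, if anything, an improvement, since vanishing curvature only kills $c_{1}$ in de Rham cohomology.
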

\begin{proof}
Let $M_{1}$ be the submanifold whose tangent space $T_{x}M_{1}=\mathrm{E}_{x}$. Since  for each $X\in\mathcal{E}$ there exists only one $Y$ such that $X.\psi=iY.\psi$, define the automorphism $J:TX\rightarrow TX$, $X.\psi=i J(X).\psi$. Thus, for each $x\in M$,  $J:T_{x}M_{1}\rightarrow T_{x}M_{1}$ defines a complex structure since

$$iJ^{2}(X).\psi=iJ(J(X)).\psi=J(X).\psi=-iX.\psi\ \Rightarrow\ J^{2}=-I.$$

\noindent Moreover,  $\nabla J=0$ because $J(\nabla X)=\nabla Y$ and, for all $X,Y\in\Omega^{0}(TM)$, 
  
  \begin{equation*}
  (\nabla X).\psi=[(\nabla J)X+J(\nabla X)].\psi=iJ(\nabla X).\psi\ \Rightarrow\ \nabla J=0.
  \end{equation*}
  
  
\noindent From the identity 

\begin{equation*}
[J(X).J(Y)+J(Y).J(X)].\psi=(XY+YX).\psi+2i[g(J(X),Y)+g(J(Y),X)].\psi
\end{equation*}

\noindent we get $g(J(X),J(Y)).\psi=\left\{g(X,Y)+i[g(J(X),Y)+g(J(Y),X)\right\}.\psi$,
 and so, $g(J(X),J(Y))=g(X,Y)$ and $g(J(X),Y)=-g(J(Y),X)$. Therefore,   $M_{1}$ is K\"{a}hler , and  
 $M_{2}$ is spin Ricci-flat  because it follows from $ f_{A}\mid_{M_{2}}=0$  that  $\cc\mid_{M_{2}} =0$, hence $w_{2}(M_{2})=0$, and $\mid\mid Ric\mid\mid^{2}=\mid\mid f_{A}\mid\mid^{2}=0$  on $M_{2}$.
\end{proof}

\noindent In the context of the arguments above, if $Ricc:TM\rightarrow TM$ is onto, then $M$ is K\"{a}hler.   Thus, taking the restriction $\mathcal{L}_{1}=\la\mid_{M_{1}}$, the canonical class of $(M_{1},J)$ is $\kappa_{J}=\mathcal{L}_{1}$. Assuming $\pi_{1}(M)=0$,    Morianu ~\cite{AM97} proved $\kappa_{J}$ and $-\kappa_{J}$ to be the only $\spinc$ classes on $M$ carrying parallel spinors, which are known to be the only basic class in $M$  ~(\cite{JM96}).
 Using de Rham's decomposition theorem, Moroianu concluded that a simply connected manifold carries a parallel spinor if, and only if, it is isometric to the riemannian product $M_{1}\times M_{2}$ where $M_{1}$ is K\"{a}hler and $M_{2}$ is spin Ricci-flat. Of course, if we assume $M$ is irreducible as cartesian product, then either $M$ is K\"{a}hler or $M$ is spin. 
To the best of author's knowledgment there is no example of a Ricci-flat spin manifold with holonomy $SO_{4}$ and it is not known the  classification of spin Ricci-flat  4-manifolds   beyond the one quoted in ~\cite{MI} .

 \section{Conclusion}
 
 In this section, Kato's inequality ~(\ref{E:90}) is used to compare  the lower eigenvalue $\lambda^{\cc}_{g}(A)$ of operator $L_{A}$ in ~(\ref{E:456}) with the the lower eigenvalue $\lambda_{g}$ 
 of  operator $L=\triangle_{g}+\frac{k_{g}}{4}$ acting on functions $f:M\rightarrow\R$.  By the Rayleigh's formula, each lower eigenvalue is given by
 
 \begin{equation}
\begin{aligned}
\lambda_{g}&=\inf_{f\in\Omega^{0}(M)} \frac{\int_{M}\{\mid \nabla f\mid^{2}\ + \ \frac{k_{g}}{4}\mid f\mid^{2}\}dv_{g}}{\int_{M}\mid f\mid^{2}dv_{g}}\\
\end{aligned}
\end{equation}

\vspace{05pt}

\begin{equation}\label{E:98}
\begin{aligned}
\lambda^{\cc}_{g}(A)&=\inf_{V\in\csa} \frac{\int_{M}\{\mid \nabla^{A}V\mid^{2}\ + \ \frac{k_{g}}{4}\mid V\mid^{2}\}dv_{g}}{\int_{M}\mid V\mid^{2}dv_{g}}
\end{aligned}
\end{equation}
  
\begin{definition}
The Perelman-Yamabe smooth invariant of M is  

\begin{equation}\label{E:567}
\bar{\lambda}(M)=\sup_{g}\lambda_{g}[vol(M,g)]^{1/2}
\end{equation}

\end{definition} 

\noindent Let $\mathscr{M}_{M}$ be the space of riemannian metrics on $M$ and $[g]=\{\zeta.g\mid \zeta:M\rightarrow(0,\infty)\}$ the conformal class of  $g$. 
 The Yamabe constant of $[g]$ is defined by

 \begin{equation}
Y_{[g]}=\inf_{\hat{g}\in[g]}\frac{\int_{M}k_{\hat{g}}dv_{\hat{g}}}{[vol(M,\hat{g})]^{1/2}}.
\end{equation}
 
 \vspace{05pt}
 
\noindent The condition $Y_{[g]}\le 0$ implies the existence of unique metric realizing the  Yamabe constant ~(\cite{RS84}). The smooth Yamabe invariant is defined as 
$\mathcal{Y}(M)=\sup_{[g]\subset\mathscr{M}}Y_{[g]}$.  Under the hypothesis  $\mathcal{Y}(M)\le 0$,  Akutagawa-Ishida-LeBrun  proved in ~\cite{AIL}  the identity 
 $\mathcal{Y}(M)=\bar{\lambda}(M)$.  By analogy, associated to the operator  $L_{A}$  we consider  

$$\bar{\lambda}^{\cc}(A)=\sup_{g\in\mathscr{M}}\lambda^{\cc}_{g}(A).[vol(M,g)]^{1/2}.$$

\noindent   and $\bar{\lambda}^{\cc}(M)=\sup_{A\in\jx}\bar{\lambda}^{\cc}(A)$. Assuming that $\mathcal{Y}(M)<0$ and $\cc\in Spin^{c}(X)$  is a class carrying a parallel spinor $\psi\in\vsa$, so
$ \bar{\lambda}^{\cc}(M)<0$ and $\jx$ is unstable, concluding Theorem ~\ref{T:02}.
  If an irreducible solution $(A,\phi)$ exists, it follows from the $\sw$-equations that

 \begin{equation*}
 \begin{aligned}
 \int_{X}\left[\mid\nabla^{A}\phi\mid^{2}+\frac{k_{g}}{4}\mid\phi\mid^{2}\right]dv_{g}=-\frac{1}{4}\int_{X}\mid\phi\mid^{4}dv_{g}.
 \end{aligned}
 \end{equation*}

\noindent  Consider $k_{g}$ isn't non-negative and let $(A,\phi)$ be an irreducible solution of the eqs. ~(\ref{E:123}),  so   from eq. ~(\ref{E:98}) we have

\begin{equation*}
\begin{aligned}
\lambda^{\cc}_{g}(A).\int_{M}\mid\phi\mid^{2}dv_{g}\le -\frac{1}{4}\int_{M}\mid\phi\mid^{4}dv_{g}\ \Rightarrow\ \lambda^{\cc}_{g}(A)<0.
\end{aligned}
\end{equation*}

\noindent Applying  Cauchy-Schwartz inequality we get

\begin{equation*}
\begin{aligned}
\int_{M}\mid\phi\mid^{2}dv_{g}\le [vol(M,g)]^{1/2}.\left[\int_{M}\mid\phi\mid^{4}dv_{g}\right]^{1/2},
\end{aligned}
\end{equation*}

\noindent and so

\begin{equation*}
\begin{aligned}
\bar{\lambda}^{\cc}_{g}(A).\left[\int_{M}\mid\phi\mid^{4}dv_{g}\right]^{1/2}\le \lambda^{\cc}_{g}(A)\int_{M}\mid\phi\mid^{2}dv_{g}\le -\frac{1}{4}\int_{X}\mid\phi\mid^{4}dv_{g}.
\end{aligned}
\end{equation*}

\noindent Hence,  $\bar{\lambda}^{\cc}(M)\le 0$, proving  theorem ~\ref{T:01}.  Whenever there exists   a $\swa$-monopole $(A,\phi)$, then 

$$\frac{1}{4}\int_{M}\mid\phi\mid^{4}dv_{g}=\int_{M}\mid F^{+}_{A}\mid^{2}dv_{g}=4\pi^{2}c^{2}_{1}(J)[M]+\int_{M}\mid F^{-}_{A}\mid^{2}dv_{g}$$

$$-\frac{1}{4}\left[\int_{X}\mid\phi\mid^{4}dv_{g}\right]^{1/2}=-\frac{1}{2}\left[\int_{M}\mid F^{+}_{A}\mid^{2}dv_{g}\right]^{1/2} \le -\pi\sqrt{ \alpha_{\cc}^{2}[M]}.$$

\noindent Let's consider the case $c^{2}_{1}(\la)[M]>0$, otherwise $\phi=0$. So, defining $\bar{\lambda}^{\cc}(M)=\sup_{A\in\jx}\bar{\lambda}^{\cc}(A)$, we get the upper bound in theorem
~\ref{T:03}

$$\bar{\lambda}^{\cc}(M)\le -\pi\sqrt{c^{2}_{1}(\la)[M]}.$$

\noindent Therefore, if $M$ admits a $\swa$-monopole, then $\lambda^{\cc}(M)<0$.  
 
It ought to be checked if $\bar{\lambda}^{\cc}(M)$ is a smooth invariant, for each $\cc\in Spin^{c}(X)$.


\vspace{20pt}

\noindent\emph{Universidade Federal de Santa Catarina \\
    Campus Universit\'ario , Trindade\\
               Florian\'opolis - SC , Brasil\\
               CEP: 88.040-900\\ phone:+55-48-96128385}
\vspace{10pt}

\end{document}